\title{Further approximations for Aharoni's rainbow generalization of the Caccetta-H\"{a}ggkvist conjecture}
\author{
Patrick Hompe \qquad  Sophie Spirkl\thanks{We acknowledge the support of the Natural Sciences and Engineering Research Council of Canada (NSERC), [funding reference number RGPIN-2020-03912]. Cette recherche a été financée par le Conseil de recherches en sciences naturelles et en génie du Canada (CRSNG), [numéro de référence RGPIN-2020-03912].}\\
\small Department of Combinatorics and Optimization\\[-0.8ex]
\small University of Waterloo\\[-0.8ex]
\small Waterloo, ON, Canada\\
\small\tt \{phompe,sspirkl\}@uwaterloo.ca}
\begin{document}

\maketitle


\begin{abstract}
  For a digraph $G$ and $v \in V(G)$, let $\delta^+(v)$ be the number of out-neighbors of $v$ in $G$. The Caccetta-H\"{a}ggkvist conjecture states that for all $k \ge 1$, if $G$ is a digraph with $n = |V(G)|$ such that $\delta^+(v) \ge k$ for all $v \in V(G)$, then $G$ contains a directed cycle of length at most $\lceil n/k \rceil$. Aharoni proposed a generalization of this conjecture, that a simple edge-colored graph on $n$ vertices with $n$ color classes, each of size at least $k$, has a rainbow cycle of length at most $\lceil n/k \rceil$. With Pelik\'anov\'a and Pokorn\'a, we showed that this conjecture is true if each color class has size ${\Omega}(k\log k)$. In this paper, we present a proof of the conjecture if each color class has size ${\Omega}(k)$, which improved the previous result and is only a constant factor away from Aharoni's conjecture. We also consider what happens when the condition on the number of colors is relaxed.
\end{abstract}

\section{Introduction and preliminaries}
We call a graph \emph{simple} if it has no loops or parallel edges, and we call a digraph \emph{simple} if its underlying undirected graph is simple. For a simple digraph $G$ and a vertex
$v \in V(G)$, let $\delta^+(v)$ denote the number of out-neighbors of $v$ in $G$. A famous conjecture in graph theory is the following, due to Caccetta and H\"{a}ggkvist:

\begin{conjecture}[\cite{ch}]\label{ch_thm}
Let $n,k$ be positive integers, and let $G$ be a simple digraph on $n$ vertices with $\delta^+(v) \ge k$ for all $v \in V(G)$. Then $G$ contains a directed cycle of length at most $\lceil n/k \rceil$.
\end{conjecture}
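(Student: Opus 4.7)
The statement to be ``proven'' here is the Caccetta--H\"aggkvist conjecture itself, a famously open problem first posed in 1978; only the cases $k \le 5$, together with some ranges of $n$ close to $k^2$, have been fully settled. So my proposal is necessarily a strategy sketch rather than a plan I expect to complete. The obvious easy reductions I would make first are: pass to a minimal counterexample so that every vertex has out-degree exactly $k$; and handle the regime $n \le 2k$, where any two out-neighborhoods from a common vertex must overlap, yielding a cycle of length $2 \le \lceil n/k\rceil$.

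For the general case, the first line of attack I would try is a breadth-first / layering argument. Fix $v_0 \in V(G)$ and let $L_i = \{u : d^+(v_0, u) = i\}$, where $d^+$ denotes directed distance. If no cycle of length at most $q := \lceil n/k \rceil$ exists, then $L_0, L_1, \ldots, L_{q-1}$ are pairwise disjoint and $|L_i| \ge k$ for $1 \le i \le q-1$; a crude count gives $1 + (q-1)k$ vertices used, which is almost but not quite a contradiction. The real obstruction is that out-edges from $L_i$ can ``jump backwards'' to some $L_j$ with $j \le i$ without closing a short cycle, and any proof must control these back-edges. A complementary line is spectral/algebraic: estimate the number of closed walks of length $\le q$ via powers of the adjacency matrix, and try to argue that one of them corresponds to a simple directed cycle. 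This is essentially the route used for small $k$.

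The main obstacle, and the reason the conjecture has resisted attack for over four decades, is extremal tightness at the blow-up of a $(q+1)$-cycle: replace each vertex of $C_{q+1}$ by an independent set of size $n/(q+1)$ and put a complete bipartite directed graph between consecutive blobs. Every vertex has out-degree exactly $k = n/(q+1)$, yet the shortest directed cycle has length exactly $q+1$. Any proof must therefore either locally deform such configurations to extract a shorter cycle, or certify a density gap away from them. This ``stability at the extremum'' is the crux of the difficulty; any local or averaging argument that does not see the global structure loses at least a constant factor.

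A realistic plan, rather than attacking the full conjecture, would aim at the approximate bounds of the form $\alpha \, n/k + O(1)$ with $\alpha < 1$, as obtained by Chv\'atal--Szemer\'edi, Shen, and Hladk\'y--Kr\'al'--Norin, and try to push $\alpha$ downward via a fractional relaxation combined with a rounding step. The hardest part there, and what I would expect to be the main technical bottleneck, is converting a short ``fractional'' closed walk into a genuine short simple directed cycle without losing the fractional gain.
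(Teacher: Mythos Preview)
Your assessment is correct in the essential point: the statement is the Caccetta--H\"aggkvist conjecture, labeled in the paper as a \emph{conjecture}, not a theorem. The paper does not prove it and does not claim to; it is quoted only as motivation for Aharoni's rainbow generalization, and the only result from this circle of ideas actually used later is Shen's approximate bound (Theorem~\ref{shen_thm}). So there is no proof in the paper to compare your sketch against, and you were right to treat the task as a discussion of approaches rather than an attempt at a complete argument.

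One small technical slip in your sketch: the case $n \le 2k$ is in fact vacuous under the paper's convention that a simple digraph has a simple underlying undirected graph. With that convention there are at most $\binom{n}{2}$ arcs, so $nk \le \binom{n}{2}$ forces $n \ge 2k+1$; there is no digraph satisfying the hypotheses with $n \le 2k$, and in particular no digon argument is available or needed. Your BFS layering heuristic and the ``back-edge'' obstruction are standard and accurately described; the blow-up of $C_{q+1}$ you cite is not quite an extremal example for the conjecture (it has cycles of length exactly $q+1$, whereas the conjecture asks for length at most $\lceil n/k \rceil = q+1$, so it is a \emph{tight} example, not a counterexample), but it does illustrate why naive averaging loses a constant factor.
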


For a graph $G$ and a function $b: E(G) \rightarrow \mathbb{N}$, a \emph{rainbow cycle (with respect to $b$)} is a cycle $C$ in $G$ such that for all $e, f \in E(C)$ with $e \neq f$, we have $b(e) \neq b(f)$. We will refer to $b$ as a \emph{coloring} of the edges of $G$.\footnote{Note that $b$ is not required to be a proper edge-coloring.} For any $i \in \{1,\cdots,K\}$, the set of edges $b^{-1}(i)$ is called a \emph{color class}. For $k, K \in \mathbb{N}$, we say that $b$ has \emph{$K$ color classes of size at least $k$} if $|b^{-1}(i)| \geq k$ for all $i \in \{1, \dots, K\}$ and $b^{-1}(i) = \emptyset$ for all $i > K$. The \emph{rainbow girth} of an edge-colored graph $G$ is the length of a shortest rainbow cycle in $G$.

In [\ref{aharoni}], Aharoni proposed a generalization of Conjecture \ref{ch_thm}:

\begin{conjecture}[\cite{aharoni}]\label{aharoni_thm}
Let $n,k$ be positive integers, and let $G$ be a simple graph on $n$ vertices. Let $b$ be a coloring of the edges of $G$ with $n$ color classes of size at least $k$. Then $G$ has rainbow girth at most $\lceil n/k \rceil$.
\end{conjecture}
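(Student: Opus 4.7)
The plan is to reduce Conjecture \ref{aharoni_thm} to Conjecture \ref{ch_thm} by constructing, from the edge-colored simple graph $G$, a simple digraph $D$ on $V(G)$ with minimum out-degree at least $k$ and with the property that every directed cycle in $D$ of length $\ell$ projects onto a rainbow cycle of length at most $\ell$ in $G$. Applying Conjecture \ref{ch_thm} to $D$ would then yield the desired bound $\lceil n/k \rceil$ on the rainbow girth.

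For the construction, I would try the following. For each color class $c \in \{1, \dots, n\}$, pick a \emph{representative vertex} $v_c$ that is incident to many edges of color $c$, and orient $k$ edges of color $c$ outward from $v_c$. A directed cycle in the resulting $D$ traverses a distinct color at each step, so its underlying cycle in $G$ is automatically rainbow. Since there are $n$ colors each contributing $k$ out-edges, the sum of out-degrees is $nk$, and the average out-degree is $k$; if one can arrange for the assignment $c \mapsto v_c$ to be sufficiently balanced across the $n$ vertices, then the minimum out-degree is also at least $k$ and Conjecture \ref{ch_thm} applies directly. I would handle the balancing step via a Hall-type matching argument on an auxiliary bipartite graph between colors and vertices, where a color is joined to every vertex that could serve as its representative.

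The main obstacle is that a color class may contain no vertex of sufficiently high degree to serve as a representative at all: a color class consisting of exactly $k$ disjoint edges (a matching) has maximum degree $1$, so no vertex can absorb $k$ out-edges of that color. Any direct reduction of this kind must relax the notion of representative and allow each color to contribute only, say, $\Omega(\sqrt{k})$ or $\Omega(\log k)$ edges to a single vertex, at the cost of requiring color classes of size $\Omega(k)$ (or larger) instead of exactly $k$. Moreover, even once each color has a viable representative, the gap between an \emph{average} out-degree of $k$ and a \emph{minimum} out-degree of $k$ is not automatic and requires careful allocation. For these reasons I would expect this route to establish Conjecture \ref{aharoni_thm} only up to a constant factor in the color class size (matching the $\Omega(k)$ bound announced in the abstract), and I expect the core difficulty in closing the constant to be that one cannot avoid losing a factor when converting sparse color classes (matchings, short paths) into out-edges concentrated at single vertices; closing this gap likely requires either an inductive argument that extracts short rainbow cycles directly from such sparse classes, or a structural refinement beyond a black-box reduction to Conjecture \ref{ch_thm}.
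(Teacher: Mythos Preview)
The statement you are addressing is Conjecture~\ref{aharoni_thm}, which the paper does \emph{not} prove; it remains open. Your proposal is therefore not a proof of the statement but, as you yourself conclude in the final paragraph, at best a route to the approximate Theorem~\ref{main_result} (color classes of size $\Omega(k)$).

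Even as a route to Theorem~\ref{main_result}, your plan diverges from the paper's. You propose to handle \emph{every} color via a representative vertex, with a Hall-type allocation to balance out-degrees, and you correctly flag matching-like color classes as the obstruction. The paper sidesteps this obstruction rather than attacking it head-on. It splits colors into ``vertex-dominating'' ones (those with a vertex incident to at least $t/100 + 8k$ edges of that color) and the rest. If almost every vertex is the center of some dominating color, the paper carries out exactly your Caccetta--H\"aggkvist reduction and applies Shen's Theorem~\ref{shen_thm}. Otherwise there are many non-center vertices; the paper deletes a small random subset $T$ of them and shows, via a Chebyshev bound on the number of surviving edges per color, that with positive probability at most $k$ non-dominating colors drop below the threshold while $|T| > 2k$. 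The surviving graph on $n' = n - |T|$ vertices then carries at least $n' + k$ healthy color classes, reducing to the ``$n+k$ colors'' regime, which Section~2 handles by completely different tools (contraction onto a small hitting set followed by the Bollob\'as--Szemer\'edi girth bound, Corollary~\ref{bollobas_cor}).

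So the idea missing from your sketch, relative to the paper's proof of Theorem~\ref{main_result}, is this random-deletion step that trades vertices for a surplus of colors, together with the independent Section~2 machinery that exploits that surplus. Your Hall-type balancing does not appear in the paper and is unnecessary once the dominating/non-dominating split is made: dominating colors get their centers directly, and non-dominating colors are never assigned a representative at all.
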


In [\ref{devos}], Conjecture \ref{aharoni_thm} was proved for $k=2$. The following approximate result shows that the conjecture holds with larger color classes:

\begin{theorem}[\cite{hompe}]\label{hompe_thm}
Let $k > 1$ be an integer, and let $G$ be a simple graph on $n$ vertices. Suppose that we have a coloring of the edges of $G$ with $n$ color classes of size at least $301k \log k$. Then $G$ has rainbow girth at most $\lceil n/k \rceil$.
\end{theorem}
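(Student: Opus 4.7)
The plan is to prove Theorem~\ref{hompe_thm} by a probabilistic subsampling argument combined with a Moore-type girth bound. Set $g = \lceil n/k \rceil$ and suppose, for contradiction, that $G$ contains no rainbow cycle of length at most $g$. The overarching idea is to extract a random subgraph $H \subseteq G$ whose density forces a short cycle, while arguing that some short cycle in $H$ is rainbow with positive probability.

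First, for each color class $C_i$, I would include each edge of $C_i$ in $H$ independently with probability $p = c/k$ for a suitable constant $c$. Since $|C_i| \geq 301 k \log k$, the expected number of edges contributed by $C_i$ is at least $301 c \log k$, so by a Chernoff bound, with high probability $|E(H)|$ is of order $n \log k$ and $H$ has average degree $d$ of order $\log k$. A standard Moore-type girth bound (e.g., Bondy or Erd\H{o}s--Gallai) then implies that $H$ has a cycle of length at most $O(\log n / \log \log k)$, which is at most $g$ in all regimes where the bound must be verified nontrivially; the boundary cases where $k$ is very close to $n$ can be disposed of by a direct argument.

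To ensure that a short cycle in $H$ is rainbow in $G$, I would bound the expected number of short cycles in $H$ that repeat a color. A non-rainbow cycle of length $\ell$ in $G$ contains two edges sharing a color class $C_i$; the probability that both such edges survive in $H$ is $p^2$, and the total number of same-color pairs within any given class is controlled (from above) by $|C_i|^2$, which appears only weakly because the large class size makes each individual same-color event rare relative to the oversampling. Summing these estimates over $\ell \leq g$ and over repeated-color patterns, one shows that the expected number of short non-rainbow cycles in $H$ is $o(1)$. Combining with the girth bound, with positive probability $H$ simultaneously contains a cycle of length at most $g$ and no short non-rainbow cycle, yielding a rainbow cycle of length at most $g$ in $G$ and contradicting the assumption.

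The chief obstacle is calibrating $p$ and the constant $301$ so that two competing demands balance: $H$ must be dense enough for the Moore bound to force a cycle of length at most $g$, yet sparse enough within each color class so that short cycles avoid color repetitions with high probability. It is precisely the $\log k$ factor in the hypothesis that permits both conditions to be met simultaneously, and sharpening this to a purely linear dependence on $k$ (the improvement achieved by the main theorem of the present paper) is presumably what requires a more refined, possibly non-probabilistic, argument.
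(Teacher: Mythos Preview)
This theorem is not proved in the present paper; it is quoted from \cite{hompe} as prior work, and the paper's contribution is to supersede it with Theorem~\ref{main_result}. So there is no proof here to compare your proposal against.

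That said, your proposal has a genuine gap. After sampling edges with probability $p=c/k$, the subgraph $H$ still contains on the order of $\Theta(\log k)$ edges of \emph{each} colour, so cycles in $H$ are not automatically rainbow. You attempt to repair this by showing that the expected number of short non-rainbow cycles in $H$ is $o(1)$, but the sketch you give cannot yield that. A non-rainbow cycle of length $\ell$ in $H$ is a non-rainbow $\ell$-cycle in $G$ all of whose $\ell$ edges land in $H$; the relevant survival probability is $p^{\ell}$, not $p^{2}$, and it must be weighed against the number of such cycles in $G$, which can be as large as $n^{\ell}$ up to polynomial factors. Since $np = cn/k \gg 1$ (indeed $n \geq 301k\log k$), the expected cycle count explodes rather than vanishing. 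Equivalently: $H$ has average degree $\Theta(\log k)$, hence contains exponentially many short cycles, and there is no mechanism preventing a constant fraction of them from repeating a colour. The Moore bound only promises that \emph{some} short cycle exists, not that a rainbow one does.

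The approach that actually works (and is the one used both in \cite{hompe} and, in refined form, throughout the present paper) is the opposite: keep \emph{exactly one} edge per colour class, so that every cycle in the retained subgraph is rainbow by construction, and then arrange---by choosing those representative edges to be incident with a small vertex set and contracting---for the rainbow subgraph to live on far fewer than $n$ vertices. One then applies the Bollob\'as--Szemer\'edi girth bound (Theorem~\ref{bollobas_thm}) to that contracted rainbow graph. The $\log k$ overhead in the hypothesis of Theorem~\ref{hompe_thm} arises from the $\log k$ factor in Theorem~\ref{bollobas_thm}, not from any second-moment cycle-counting.
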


In [\ref{chvatal}], Chv\'{a}tal and Szemer\'{e}di show that in a simple digraph of minimum out-degree at least $k$, there exists a directed cycles of length at most $2n/k$. Our main result is an improvement of Theorem \ref{hompe_thm}, reducing the required size of the color classes from ${\Omega}(k\log k)$ to ${\Omega}(k)$.
\begin{theorem}\label{main_result}
Let $k \ge 1$ be an integer, and let $G$ be a simple graph on $n$ vertices. Suppose we have a coloring of the edges of $G$ with $n$ color classes of size at least $ck$, where $c = 10^{11}$. Then $G$ has rainbow girth at most $n/k$.
\end{theorem}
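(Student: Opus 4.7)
My plan is to reduce to the directed Chv\'{a}tal-Szemer\'{e}di bound by passing to an auxiliary digraph. Fix $c = 10^{11}$ and suppose each color class has size at least $ck$. First, for each color $i$, I would exploit the fact that $|b^{-1}(i)| \ge ck$ to extract a vertex $v_i$ incident to many edges of color $i$, via a double-counting argument in the subgraph $(V(G), b^{-1}(i))$. Orienting these color-$i$ edges outward from $v_i$ produces arcs, labelled by color $i$, in an auxiliary digraph $D$ on vertex set $V(G)$.

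I would then aggregate these oriented edges across all $n$ colors and, using an averaging argument, pass to a large subset $V' \subseteq V(G)$ on which every vertex of $D[V']$ has out-degree at least $2k$. Here the enormous constant $c$ plays its role: after losing factors in the double-counting and in the vertex-cleanup that discards low-out-degree vertices, the out-degree in $D[V']$ must still sit comfortably above $k$. Applying the Chv\'{a}tal-Szemer\'{e}di bound to $D[V']$ then yields a directed cycle of length at most $2|V'|/(2k) \le n/k$, whose arcs, by construction, carry distinct color labels; this cycle pulls back to a rainbow closed walk in $G$, from which an actual rainbow cycle of length at most $n/k$ can be extracted.

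Two main obstacles need to be overcome. The first is the \emph{rainbow-matching problem}: several colors may contribute arcs on the same pair of vertices, or a short directed cycle in $D$ may accidentally use two arcs carrying the same color label, so care is needed when assigning arcs to colors and when converting a closed walk into an actual rainbow cycle of distinct edges. The second, and more technically delicate, is calibrating the slack correctly so that the out-degree in $D[V']$ is genuinely at least $2k$ rather than only $k$; this is precisely what sharpens the Chv\'{a}tal-Szemer\'{e}di conclusion from $2n/k$ down to $n/k$, and it is what forces an enormous constant $c = 10^{11}$ to absorb the cumulative losses from the double-counting, the cleanup, and the rainbow-matching steps, rather than a modest constant like that of Theorem \ref{hompe_thm}.
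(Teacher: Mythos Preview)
Your reduction to a digraph is essentially the paper's strategy in one special case, but as stated it has a genuine gap at the very first step. You propose, for each color $i$, to ``extract a vertex $v_i$ incident to many edges of color $i$ via a double-counting argument.'' No such vertex need exist. A color class of size $ck$ can be a perfect matching on $2ck$ vertices, so that every vertex is incident to exactly one edge of that color; more generally, the maximum degree in the color-$i$ subgraph can be as low as~$1$. Double counting only tells you that the average degree among vertices touched by color~$i$ is at least $1$ and at most the maximum degree, which is uninformative here. If a constant fraction of the colors are matching-like, then after your orientation step the digraph $D$ has only $O(n)$ arcs in total, average out-degree $O(1)$, and no cleanup can produce a large subset with out-degree $\ge 2k$.

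This is exactly the obstruction the paper's proof is built around. The paper splits colors into \emph{vertex-dominating} ones (those that do have a vertex of degree $\ge t/100 + 8k$ in that color) and the rest. When almost all colors are vertex-dominating, it carries out essentially your plan: build a digraph with out-degree $\ge 8k$ and apply Shen's $\lceil n/k\rceil + 73$ bound (not Chv\'atal--Szemer\'edi) to get a rainbow cycle. When many colors are \emph{not} vertex-dominating, the digraph route is abandoned entirely: instead one deletes a random set $T$ of about $4k$ vertices and uses Chebyshev's inequality (exploiting the bounded-degree structure of non-dominating colors to control covariances) to show that most such colors keep $t/100$ edges in $G\setminus T$. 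This leaves $G\setminus T$ with at least $|V(G\setminus T)| + k$ surviving color classes of size $\ge t/100$, and then the separate $n+k$-color machinery of Section~2 (Theorems~\ref{res_one} and~\ref{res_two}, combined in Corollary~\ref{cor_to_use}) finishes the job. Your proposal contains no analogue of this second case, and without it the argument does not go through.
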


Our paper is organized as follows. In the remainder of Section $1$, we state some results we will use throughout the paper. In Section $2$, we prove a number of results on the case where the number of colors is $n+ck$ for some constant $c$. Then, in Section $3$, we prove the main result of the paper, which deals with the case of $n$ color classes. Finally, in Section $4$, we present some ideas for future work.

The proof, at a high level, proceeds by a reduction from the case of $n$ colors to the case of $n+c'k$ colors for some large constant $c'$, by applying existing results for the Caccetta-H\"aggkvist conjecture using the method of \cite{aharoni}. Then, in the case of $n+c'k$ colors, we use several methods to establish results which are substantially stronger.

We will make use of the following results due to Bollob\'{a}s and Szemer\'{e}di [\ref{bollobas}] and Shen [\ref{shen}], respectively. The first deals with the girth of a simple graph, which is our primary tool in finding short cycles, while the second is an approximate result for Conjecture \ref{ch_thm}. In this paper, $\log$ denotes the logarithm with base $2$.

\begin{theorem}[\cite{bollobas}]\label{bollobas_thm}
For all $n \ge 4$ and $k \ge 2$, if $G$ is a simple graph on $n$ vertices with $n+k$ edges, then $G$ contains a cycle of length at most $$\frac{2(n+k)}{3k}(\log k+\log \log k + 4).$$
\end{theorem}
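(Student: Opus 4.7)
The plan is to reduce $G$ to a multigraph of minimum degree at least three, apply a BFS/Moore-bound argument there, and then translate the resulting short cycle back to $G$ via a weighted averaging argument.

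First, I would iteratively delete from $G$ every vertex of degree at most one; this preserves the quantity $|E|-|V|$ and yields the $2$-core $G^*$, a subgraph on some $n^* \le n$ vertices with $n^* + k$ edges and minimum degree at least two. I would then suppress every maximal path of internal degree-two vertices of $G^*$, producing a multigraph $H$ on some $m \le n^*$ vertices with $m+k$ edges and minimum degree at least three, in which each edge $e \in E(H)$ corresponds to an internally disjoint path $P_e$ in $G^*$ of length $\ell_e \ge 1$, with $\sum_{e \in E(H)} \ell_e = |E(G^*)| \le n+k$. The minimum-degree-three condition forces $3m \le 2(m+k)$, so $m \le 2k$ and $|E(H)| \le 3k$.

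Next, I would find a short cycle in $H$ by a BFS/Moore-bound argument. Starting a BFS in $H$ from any vertex $v$, each new vertex has at least two unexplored neighbors, so the BFS ball of radius $r$ contains at least $3 \cdot 2^r - 2$ vertices unless a cycle is closed; since $|V(H)| \le 2k$, this already forces a cycle in $H$ of combinatorial length at most roughly $2\log_2 k$. A sharper edge-counting refinement of the Moore bound, more carefully accounting for the $k$ extra edges over a spanning tree and the possibility of degenerate short closures, improves the combinatorial length of some cycle in $H$ to at most $\log_2 k + \log_2\log_2 k + 4$.

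Finally, I would translate such a cycle $C$ in $H$ into a cycle in $G$ of length $\sum_{e \in C} \ell_e$. The remaining task is to guarantee that some short combinatorial cycle in $H$ also has small total $\ell_e$-weight. I would achieve this by averaging, for example over the short cycles produced by BFS from each vertex of $H$, or over the fundamental cycles with respect to a carefully chosen spanning tree, exploiting $\sum_e \ell_e \le n+k$ and $|E(H)| = m+k$ together with the combinatorial bound of the previous step to extract a cycle in $H$ of weighted length at most $\frac{2(n+k)}{3k}(\log k + \log\log k + 4)$. The chief difficulty is precisely this averaging step: one must exploit the distribution of the weights $\ell_e$, and not merely their average, to recover both the denominator $3k$ (which comes from the edge-count lower bound $|E(H)| \ge 3m/2$ produced by minimum degree three) and the combinatorial factor $\log k + \log\log k + 4$ simultaneously.
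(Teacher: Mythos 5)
First, note that the paper does not prove this statement at all: Theorem~\ref{bollobas_thm} is quoted verbatim from Bollob\'as and Szemer\'edi [\ref{bollobas}] and used as a black box (only Corollary~\ref{bollobas_cor} is derived from it). So you are attempting to reprove an external result, and your skeleton --- pass to the $2$-core, suppress degree-two vertices to get a multigraph $H$ with $m\le 2k$ vertices, $m+k$ edges and minimum degree $3$, find a combinatorially short cycle in $H$ by a Moore-type argument, then control the total weight $\sum_{e\in C}\ell_e$ --- is indeed the standard frame for results of this type.

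However, there are two genuine gaps. The first is the claim that a ``sharper edge-counting refinement'' yields a cycle in $H$ of \emph{combinatorial} length at most $\log_2 k+\log_2\log_2 k+4$. This is false: the Moore bound for minimum degree $3$ on at most $2k$ vertices only forces girth roughly $2\log_2 k$, and there exist cubic graphs on $\Theta(k)$ vertices whose girth exceeds $\log_2 k+\log_2\log_2 k+4$, so no refinement can halve the exponent. The factor $2$ in $\frac{2(n+k)}{3k}$ has to be absorbed somewhere, and your accounting of where the $2$, the $3k$, and the $\log k+\log\log k$ each come from does not close up. The second and more serious gap is the weighted averaging step, which you yourself flag as ``the chief difficulty'' and then do not carry out. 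Knowing that the average of $\ell_e$ over $E(H)$ is about $\frac{n+k}{3k}$ does not produce a cycle that is simultaneously combinatorially short and light: a BFS-short cycle may consist entirely of heavy edges, and averaging over fundamental cycles of a spanning tree controls neither their combinatorial length nor their weight without substantial further work. This tension between weight and combinatorial length is precisely the content of the Bollob\'as--Szemer\'edi argument (which grows balls in the \emph{weighted} graph and trades off expansion against weight), so as written the proposal restates the difficulty rather than resolving it.
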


\begin{theorem}[\cite{shen}]\label{shen_thm}
Let $G$ be a simple digraph with $\delta^+(v) \ge k$ for all $v \in V(G)$. Then $G$ contains a directed cycle of length at most $\lceil n/k \rceil + 73$.
\end{theorem}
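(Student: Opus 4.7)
The plan is to argue by contradiction: suppose every directed cycle of $G$ has length at least $\lceil n/k \rceil + 74$, and derive a contradiction by a careful analysis of BFS-out-layers. For a vertex $v$, write $L_i(v) = \{u : \operatorname{dist}(v,u)=i\}$ and $T_i(v) = \bigcup_{j\le i} L_j(v)$. Each out-edge leaving $u \in L_i(v)$ goes either to $L_{i+1}(v)$ (a \emph{forward} edge) or to $T_i(v)$ (a \emph{non-forward} edge); the latter together with the BFS in-tree from $v$ to $u$ and any short return path from the endpoint back to $v$ must produce a directed cycle whose length is bounded by a function of the depths involved.

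The first step is to control the non-forward edges. The hypothesis that every cycle is long means that, for each $u \in L_i(v)$, a non-forward edge $u \to w$ with $w \in L_j(v)$, $j \le i$, forces the out-distance from $w$ to $v$ to be at least $\lceil n/k \rceil + 73 - (i+1-j)$, which must be impossible once $i$ is moderately large or $w$ lies in a well-explored part of the digraph. By bounding the total number of such non-forward edges inside $T_i(v)$, I would show $|L_{i+1}(v)| \ge k - o(k)$ in each layer, and hence $|T_i(v_0)| \ge ki + 1 - O(1)$ for a suitably chosen root $v_0$, obtained by an averaging argument over $V(G)$ to pick a vertex whose BFS-out-expansion is near-optimal.

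The second step is a meeting argument between forward and backward expansion. Running the symmetric construction on the reverse digraph $G^{\mathrm{op}}$, which also has a vertex $v_1$ with analogously fast expansion, I would show that the set of vertices reachable from $v_0$ within $\lfloor n/(2k)\rfloor + O(1)$ steps and the set that reaches $v_1$ within $\lfloor n/(2k)\rfloor + O(1)$ steps must overlap (since their sizes exceed $n/2$), and combine a forward path from $v_0$, a short $v_0$--$v_1$ structural link, and a reverse path into $v_1$ to close a cycle of the required length $\lceil n/k\rceil + O(1)$.

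The hard part, and the whole reason the improvement from $2n/k$ (Chv\'atal--Szemer\'edi) down to $\lceil n/k\rceil + O(1)$ is delicate, is making the non-forward-edge count in the first step precise enough that the constant in the "$+O(1)$" is only $73$. Non-forward edges do not individually produce short cycles, so the accounting must be global: essentially a double count of pairs (non-forward edge, short directed path closing it), where the length of the closing path is forced by the girth hypothesis. This is where the casework (on how early $|T_i(v_0)|$ crosses thresholds like $n/3$, $n/2$, $2n/3$) and the explicit value of the additive constant arise, and it is the step I expect to absorb almost all of the technical effort.
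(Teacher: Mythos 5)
This statement is Shen's theorem, which the paper cites as a black box from [\ref{shen}] and does not prove, so there is no internal proof to compare against; your proposal has to stand on its own, and as written it does not. The most concrete problem is in your first step: a non-forward edge $u \to w$ with $u \in L_i(v)$ and $w \in L_j(v)$, $j \le i$, is \emph{not} ruled out by the girth hypothesis. The BFS out-tree gives a path from $v$ to $u$, not from $w$ back to $u$, so closing a cycle through $u \to w$ requires a directed path from $w$ to $u$ (or to $v$), and nothing in the out-degree hypothesis guarantees such a path exists at all, let alone that it is short. A vertex in $L_i$ may send every one of its $k$ out-edges backward into $T_i$ without creating any cycle, so the claim that this ``must be impossible once $i$ is moderately large'' is false, and with it the conclusion $|L_{i+1}(v)| \ge k - o(k)$. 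The same issue undermines your formula for the forced return distance (which in any case should read $\lceil n/k\rceil + 73 - i$, not $\lceil n/k\rceil + 73 - (i+1-j)$, since the closed walk has length $i + 1 + d$).

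The second step has an independent gap: running the construction on $G^{\mathrm{op}}$ requires minimum \emph{in}-degree at least $k$, which does not follow from minimum out-degree at least $k$ (individual vertices may have in-degree $0$), so the claimed backward expansion from $v_1$ is unavailable; and even granting both expansions, the ``short $v_0$--$v_1$ structural link'' needed to close the cycle is unspecified and is exactly the kind of return path the hypothesis does not provide. Finally, you explicitly defer the entire quantitative accounting that produces the additive constant $73$ to future effort; since that accounting is the whole content of Shen's (long and delicate) argument refining Chv\'atal--Szemer\'edi, the proposal is an outline of a strategy --- one whose two load-bearing steps both fail as stated --- rather than a proof.
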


We will use the following immediate corollary of Theorem \ref{bollobas_thm}:
\begin{corollary}\label{bollobas_cor}
For all $n \ge 4$ and $k \ge 2$, if $G$ is a simple graph on $n$ vertices with $n+k$ edges, then $G$ contains a cycle of length at most $$\frac{14(n+k)\log{k}}{3k}.$$
\end{corollary}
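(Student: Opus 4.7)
The plan is to derive this directly from Theorem \ref{bollobas_thm} by absorbing the lower-order terms $\log\log k + 4$ into a multiplicative constant in front of $\log k$. Specifically, Theorem \ref{bollobas_thm} gives a cycle of length at most $\frac{2(n+k)}{3k}(\log k + \log\log k + 4)$, so it suffices to show that
\[
\log k + \log\log k + 4 \;\le\; 7\log k
\]
whenever $k \ge 2$, since then multiplying by $\frac{2(n+k)}{3k}$ yields exactly the bound $\frac{14(n+k)\log k}{3k}$ claimed in the corollary.

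The inequality above is equivalent to $\log\log k + 4 \le 6\log k$. Setting $x = \log k$, this becomes $\log x + 4 \le 6x$ for $x \ge 1$. At $x = 1$ (i.e.\ $k = 2$) the left side equals $4$ and the right side equals $6$, so the inequality holds with room to spare. Moreover, the function $f(x) = 6x - \log x - 4$ is increasing for $x \ge 1$, because its derivative $6 - \frac{1}{x\ln 2}$ is positive on $[1,\infty)$. Hence $f(x) \ge f(1) = 2 > 0$ for all $x \ge 1$, which gives the desired bound for every $k \ge 2$.

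There is no real obstacle here; the whole content is the elementary check that $\log\log k + 4$ is dominated by a constant multiple of $\log k$ once $k \ge 2$. The constant $7$ is chosen so that the resulting coefficient $\frac{14}{3}$ is clean to use in later estimates, but any constant strictly greater than $1$ would eventually work for sufficiently large $k$; the point of the corollary is that the bound takes this uniform form already from $k = 2$ onward.
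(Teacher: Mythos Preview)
Your proof is correct and follows essentially the same approach as the paper: reduce to the inequality $\log\log k + 4 \le 6\log k$, check the base case $k=2$, and verify that the difference is increasing. The only cosmetic difference is that you substitute $x=\log k$ and differentiate with respect to $x$, whereas the paper differentiates directly with respect to $k$; the content is the same.
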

\begin{proof}
By Theorem \ref{bollobas_thm}, we have that the girth is at most:
$$\frac{2(n+k)}{3k}(\log k+\log \log k + 4) \le \frac{14(n+k)\log{k}}{3k}$$
since that is equivalent to:
$$\log{\log{k}}+4 \le 6\log{k}.$$
To see that this is true, let $f(k) = 6\log{k} - \log{\log{k}}-4$. Then $f(2) = 6-4=2 \ge 0$, and for all $k \ge 2$ we have:
\begin{align*}
    f'(k) = \frac{6}{k \ln{2}} - \frac{1}{\log{k}\ln{2}}\frac{1}{k \ln{2}} \ge \frac{4}{k \ln{2}} \ge 0.
\end{align*}
It follows that $f(k) \ge 0$ for all $k \ge 2$, as desired. This proves Corollary \ref{bollobas_cor}.\end{proof}

Two final results we will make use of is a set of Chernoff bounds and Chebyshev's Inequality:
\begin{theorem}[\cite{chernoff}]\label{chernoff_thm}
Let $\{X_i\}_{i=1}^m$ be independent indicator random variables, and let $X = \sum_{i=1}^m X_i$. Then for any $\epsilon > 0$, we have:
\begin{align*}
    \mathbb{P}(X \le (1-\epsilon)\mathbb{E}[X]) &\le \exp\left(-\frac{\epsilon^2}{2}\mathbb{E}[X]\right);
    \\
    \mathbb{P}(X \ge (1+\epsilon)\mathbb{E}[X]) &\le \exp\left(-\frac{\epsilon^2}{2+\epsilon}\mathbb{E}[X]\right).
\end{align*}
\end{theorem}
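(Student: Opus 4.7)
The plan is to use the standard moment generating function (Bernstein) approach. For the upper tail, I would first fix any $t > 0$ and apply Markov's inequality to the nonnegative random variable $e^{tX}$, yielding
$$\mathbb{P}(X \ge (1+\epsilon)\mu) = \mathbb{P}(e^{tX} \ge e^{t(1+\epsilon)\mu}) \le \frac{\mathbb{E}[e^{tX}]}{e^{t(1+\epsilon)\mu}},$$
where $\mu := \mathbb{E}[X]$. By independence of the $X_i$, I can factor $\mathbb{E}[e^{tX}] = \prod_{i=1}^m \mathbb{E}[e^{tX_i}]$. For each indicator with $p_i := \mathbb{P}(X_i = 1)$, I get $\mathbb{E}[e^{tX_i}] = 1 + p_i(e^t - 1) \le \exp(p_i(e^t-1))$ by the elementary inequality $1+x \le e^x$. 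Multiplying and using $\sum_i p_i = \mu$ gives $\mathbb{E}[e^{tX}] \le \exp((e^t-1)\mu)$.

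Next, I would optimize over $t$ by choosing $t = \ln(1+\epsilon)$, which produces the classical sharp bound
$$\mathbb{P}(X \ge (1+\epsilon)\mu) \le \left(\frac{e^{\epsilon}}{(1+\epsilon)^{1+\epsilon}}\right)^{\mu} = \exp\bigl(-\mu\,\phi_+(\epsilon)\bigr), \qquad \phi_+(\epsilon) := (1+\epsilon)\ln(1+\epsilon) - \epsilon.$$
To reach the stated form $\exp(-\epsilon^2\mu/(2+\epsilon))$, I would verify the calculus inequality $\phi_+(\epsilon) \ge \epsilon^2/(2+\epsilon)$ for all $\epsilon \ge 0$. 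The cleanest route is to set $h(\epsilon) := (2+\epsilon)\phi_+(\epsilon) - \epsilon^2$, check $h(0)=h'(0)=h''(0)=0$, and confirm $h'''(\epsilon) \ge 0$ on $[0,\infty)$ by direct computation. For the lower tail the approach is symmetric: apply exponential Markov with parameter $-t$ for $t>0$, factor via independence, and optimize at $t = -\ln(1-\epsilon)$ (for $0 < \epsilon < 1$; the case $\epsilon \ge 1$ is handled by noting that $(1-\epsilon)\mu \le 0$ reduces the claim to $\mathbb{P}(X=0)$, which is trivial). This yields $\mathbb{P}(X \le (1-\epsilon)\mu) \le \exp(-\mu\,\phi_-(\epsilon))$ where $\phi_-(\epsilon) := (1-\epsilon)\ln(1-\epsilon) + \epsilon$, and I would then need $\phi_-(\epsilon) \ge \epsilon^2/2$.

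The conceptual content (exponential Markov plus independence plus $1+x \le e^x$) is entirely routine, and the only real obstacle is the two calculus inequalities that convert the sharp ratio-form Chernoff estimates into the user-friendly exponential forms in the statement. The lower-tail inequality $\phi_-(\epsilon) \ge \epsilon^2/2$ falls out immediately from the Taylor expansion $\phi_-(\epsilon) = \sum_{n \ge 2} \epsilon^n/(n(n-1))$ by retaining only the leading term, but the upper-tail inequality $\phi_+(\epsilon) \ge \epsilon^2/(2+\epsilon)$ does not reduce to such a series comparison and is where I expect the bookkeeping to require the most care; the derivative-based argument above is the most reliable way I know to dispatch it uniformly in $\epsilon \ge 0$.
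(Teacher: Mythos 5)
This is a quoted result (cited to Mitzenmacher--Upfal), for which the paper supplies no proof, and your argument is the standard one found in that reference: exponential Markov, factor the moment generating function by independence, bound via $1+x\le e^x$, optimize $t$, and then convert the ratio-form bounds with the two calculus inequalities. Your plan is correct as stated (the derivative check $h'''(\epsilon)=\tfrac{1+2\epsilon}{(1+\epsilon)^2}\ge 0$ does close the upper-tail inequality, and one can also get it directly from $\ln(1+\epsilon)\ge \tfrac{2\epsilon}{2+\epsilon}$), so there is nothing to compare against and nothing missing.
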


\begin{theorem}[Chebyshev's Inequality]\label{chebyshev}
Let $X$ be a random variable with finite expected value $\mu$ and finite non-zero variance $\sigma^2$. Then for any real number $k > 0$ we have:
\begin{align*}
    \mathbb{P}(|X-\mu| \ge k \sigma) \le \frac{1}{k^2}.
\end{align*}
\end{theorem}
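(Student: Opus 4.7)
The plan is to derive Chebyshev's Inequality as a standard consequence of Markov's Inequality applied to the squared deviation $(X-\mu)^2$. First I would establish Markov's Inequality, which states that for any non-negative random variable $Y$ and any real $a > 0$, one has $\mathbb{P}(Y \ge a) \le \mathbb{E}[Y]/a$. This follows directly from the pointwise bound $Y \ge a\cdot \mathbf{1}_{\{Y \ge a\}}$: taking expectations of both sides and using linearity together with monotonicity yields $\mathbb{E}[Y] \ge a\,\mathbb{P}(Y \ge a)$, and rearranging gives the inequality.

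Next I would apply Markov's Inequality to the random variable $Y = (X-\mu)^2$. This $Y$ is non-negative by construction, and its expectation equals the variance $\sigma^2$, which is finite by hypothesis. Choosing the threshold $a = k^2\sigma^2$, which is strictly positive because $k > 0$ and $\sigma^2 > 0$, Markov's Inequality gives
\[
\mathbb{P}\bigl((X-\mu)^2 \ge k^2\sigma^2\bigr) \le \frac{\sigma^2}{k^2\sigma^2} = \frac{1}{k^2}.
\]

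The final step is to rewrite the event on the left in the form required by the statement. Since $k\sigma > 0$ and squaring is strictly increasing on $[0,\infty)$, the event $\{(X-\mu)^2 \ge k^2\sigma^2\}$ is exactly $\{|X-\mu| \ge k\sigma\}$. Substituting this equivalence yields $\mathbb{P}(|X-\mu| \ge k\sigma) \le 1/k^2$, which is the desired inequality. There is no real obstacle here: one only needs to verify that Markov's Inequality applies, which reduces to the assumed finiteness of $\sigma^2$, and to invoke the positivity of $\sigma$ when passing between the squared and absolute-value formulations. The result is a classical probabilistic tool, invoked later in the paper alongside the Chernoff bounds of Theorem \ref{chernoff_thm}.
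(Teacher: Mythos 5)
Your proof is correct: the paper states Chebyshev's Inequality without proof as a classical tool, and your derivation via Markov's Inequality applied to $(X-\mu)^2$ is the standard, complete argument. Nothing is missing.
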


\section{$n+ck$ colors}
We first consider a relaxation of Conjecture \ref{aharoni_thm} where we have $n+c_1k$ color classes each of size at least $c_2k$, for constants $c_1,c_2$ which we will specify. In this case, we obtain upper bounds for the rainbow girth that are stronger than $\lceil n/k \rceil$ to a surprising degree. For this reason, these results are interesting in their own right. They are also used in the proof of our main result in the next section.

Our first result is the following:
\begin{theorem}\label{res_one}
Let $k > 1$ be an integer, and let $G$ be a simple graph on $n$ vertices. Suppose we have a coloring of the edges of $G$ with $n+k$ color classes of size at least $ck$, where $c=10^9$. Then $G$ has rainbow girth at most $6$ or $G$ has rainbow girth at most:
$$\frac{n(\log k)^2}{10k^{3/2}}+14\log k.$$
\end{theorem}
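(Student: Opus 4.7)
The plan is to combine a random vertex-sampling argument with Corollary \ref{bollobas_cor}. With $n+k$ color classes each containing at least $ck$ edges, we should be able to sample a small vertex set $V' \subseteq V(G)$ such that $G[V']$ retains an edge from most of the $n+k$ color classes. Taking one edge per represented color then yields a rainbow subgraph on few vertices but many edges, and Corollary \ref{bollobas_cor} supplies a short cycle in it, automatically rainbow in $G$ by construction. The ``rainbow girth at most $6$'' alternative covers the case where sampling fails because the color classes are too concentrated on few vertices.

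I would begin with a structural dichotomy. For each color $c$, let $G_c$ be the graph with edge set $b^{-1}(c)$, and let $\mu_c$ and $\Delta_c$ be its matching number and maximum degree. The elementary bound $\mu_c \geq |E(G_c)|/(2\Delta_c)$ forces either $\mu_c \geq \tfrac{1}{2}\sqrt{ck}$, or some vertex $v_c$ is incident to at least $\sqrt{ck}$ edges of color $c$ (a ``star-like'' color class). If many color classes are star-like, then since each vertex can be the center of at most $(n-1)/\sqrt{ck}$ such stars, pigeonhole forces two star-like colors $c_1, c_2$ to share (or be close at) a common center $v$. Combined with the remaining $n+k-2$ color classes each having at least $ck$ edges, a K\H{o}v\'ari--S\'os--Tur\'an--flavoured argument locates a third-color edge among the vertices of these two large star-neighborhoods, closing a rainbow cycle of length $3$, $4$, or $6$.

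Otherwise, every color class has matching number at least $\tfrac{1}{2}\sqrt{ck}$. Sample $V' \subseteq V(G)$ by including each vertex independently with probability $p := 1/(ck)^{1/4}$. By Theorem \ref{chernoff_thm}, $|V'| \leq 2pn$ with high probability. Fix a matching $M_c \subseteq G_c$ of size $\tfrac12\sqrt{ck}$ for each $c$; since matched edges are vertex-disjoint, the events ``both endpoints of $e$ lie in $V'$'' for $e \in M_c$ are independent, so
\[
\Pr[c \text{ is represented in } V'] \geq 1 - (1-p^2)^{\sqrt{ck}/2} \geq 1-e^{-1/2}.
\]
Applying Theorem \ref{chebyshev} to the count of represented colors (with variance bounded by controlling pairwise covariances of representedness events, which are small because matchings from different colors share few common vertex-pairs) shows that with positive probability at least $\tfrac{1}{2}(1-e^{-1/2})(n+k)$ colors are represented while simultaneously $|V'| \leq 2pn$. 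Picking one edge per represented color yields a rainbow subgraph $H$ with $n' \leq 2n/(ck)^{1/4}$ vertices and $m' \geq c_2(n+k)$ edges for an absolute constant $c_2 > 0$. Corollary \ref{bollobas_cor} then produces a cycle in $H$ of length at most $14(n'+k')\log k'/(3k')$ where $k' = m' - n'$. Substituting $p$ and using $c = 10^9$, after simplification this bound is at most $\tfrac{n(\log k)^2}{10k^{3/2}} + 14\log k$, with the additive $14\log k$ absorbing the regime when $n/k^{3/2}$ is small.

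The most delicate step will be the structural dichotomy: arguing that many star-like color classes force a rainbow cycle of length at most $6$, rather than merely a short rainbow cycle of unspecified length, requires careful case analysis and extremal-type bounds on where a third-color edge can lie relative to two large color-stars. Controlling the variance in the Chebyshev step is also subtle because representedness events across color classes can be positively correlated through shared vertices, but standard bookkeeping should suffice given the generous constant $c = 10^9$.
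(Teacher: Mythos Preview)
Your proposal has a genuine gap in the star-like case, and the dichotomy is not set up to cover all possibilities. Even if two star-like colors share a centre $v$, the two neighbourhoods $A=N_{c_1}(v)$ and $B=N_{c_2}(v)$ together span only about $2\sqrt{ck}$ vertices, hence at most $\sim 2ck$ edges; nothing prevents all of those from having colour $c_1$ or $c_2$, so a ``K\H{o}v\'ari--S\'os--Tur\'an--flavoured argument'' does not by itself locate a third-colour edge inside $A\cup B$. More structurally, your ``otherwise'' branch assumes \emph{every} colour has matching number at least $\tfrac12\sqrt{ck}$, but the negation of ``many colours are star-like'' is only ``few are star-like''; a mixture of star-like and large-matching colours falls into neither case. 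Since star-like colours are represented in $G[V']$ only with probability roughly $p$, a handful of them already invalidates the sampling bound you need, and the constant-fraction loss in represented colours is fatal to the excess-edge count unless you can dispose of star-like colours completely.

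The paper avoids this dichotomy altogether. The key observation is structure-free: any colour class of size $ck$ is incident to at least $\sqrt{ck}$ vertices (otherwise it would have fewer than $\binom{\sqrt{ck}}{2}<ck$ edges). From this one builds, by iterated random selection, a \emph{hitting set} $S$ of size at most $\tfrac{n\log k}{140\sqrt{k}}$ such that every colour has an edge with an endpoint in $S$. Choosing one such edge per colour and then \emph{contracting} each non-$S$ vertex into a neighbour in $S$ produces a multigraph $H'$ on $|S|$ vertices with exactly $|S|+k$ edges, retaining all $n+k$ colours. The alternative ``rainbow girth at most $6$'' is not about star-like colour classes at all; it is simply the case where $H'$ fails to be simple, so a loop or parallel pair in $H'$ unfolds to a rainbow cycle of length at most $6$ in $G$. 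When $H'$ is simple, Corollary~\ref{bollobas_cor} applied with $|S|$ vertices and excess $k$ gives the stated bound directly, after multiplying by $3$ for the contraction blow-up. This hitting-set-plus-contraction trick is the missing idea: it keeps the excess equal to $k$ exactly and handles all colour classes uniformly, whereas your induced-subgraph sampling loses a constant fraction of the colours and forces the problematic structural split.
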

\begin{proof}
Since the graph is simple, we have that $n^2 \ge |E(G)| \ge cnk$ and thus we may assume that $n \ge ck$. Now, we claim there exists a set of vertices $S$ with $|S| \le n \log{k} / (140\sqrt{k})$ such that every color class has at least one edge incident to a vertex in $S$. To see this, we let $s = \lfloor 2\log{k} \rfloor$ and $t = \lfloor n/(560\sqrt{k}) \rfloor$. We will iteratively construct $s$ sets of vertices $S_1, \cdots, S_s$, each of size at most $t$, as follows. Suppose we have constructed $S_1, \cdots, S_i$ so far. Let $T_i = \cup_{j=1}^i S_j$, and let $C_i$ denote the set of colors whose color class has no edge incident to a vertex in $T_i$. Let $H$ be a random set of $t$ vertices chosen uniformly with repetition. For any color class $a$, note that the number of vertices which are incident to an edge of color $a$ is at least $\sqrt{ck}$, since if there are at most $\sqrt{ck}$ vertices incident to edges of color $a$, the number of edges of color $a$ will be at most $ck/2$. Also, we have that $t \ge n/(560\sqrt{k})-1 \ge n/(1120\sqrt{k})$ since $n \ge 1120\sqrt{k}$ which is implied  by $n \ge ck$. Using these two observations, we have that the expected number of colors in $C_i$ whose color class has no edges incident to the vertices of $H$ is at most:
$$
    \left(1-\frac{\sqrt{ck}}{n}\right)^{t} |C_i| \le
    \left(1-\frac{\sqrt{ck}}{n}\right)^{n/(1120\sqrt{k})} |C_i| \le 
    e^{-\sqrt{c}/1120} |C_i|. \\
$$

Thus, we can choose $S_{i+1}$ such that $|C_{i+1}| \le e^{-\sqrt{c}/1120} |C_i|$, and iterate. When we finish, we have a collection of sets $\{S_1,S_2,\cdots,S_s\}$ such that:
\begin{align*}
    |C_s| \le e^{-\sqrt{c}s/1120}(n+k) &\le 
    e^{-\sqrt{c}(2\log{k}-1)/1120}(n+k)  \\ &\le 2ne^{-\sqrt{c}(\log{k})/1120} \\
    &\le \frac{n\log{k}}{280\sqrt{k}}
\end{align*}

where that last inequality is true for $k \ge 2$ since:
\begin{align*}
    2nk^{-\sqrt{c}/(1120\ln{2})} \le \frac{n\log{k}}{280\sqrt{k}} \iff 560 \le k^{\sqrt{c}/(1120\ln{2})-1/2}\log{k}
\end{align*}
which is true for $k=2$ and thus for all $k \ge 2$. Now, we have that $T_s$ is a set of vertices with $|T_s| \le \frac{n\log{k}}{280\sqrt{k}}$ such that at most $\frac{n\log{k}}{280\sqrt{k}}$ colors $a$ have no edge of their color class adjacent to any of the vertices in $T_s$. It follows that, by adding at most $\frac{n\log{k}}{280\sqrt{k}}$ vertices, we can find a set of vertices of size at most $\frac{n\log{k}}{140\sqrt{k}}$ which is incident to at least one edge of every color class, as desired.

Now, let $S$ be a set of at most $\frac{n\log{k}}{140\sqrt{k}}$ vertices such that $S$ is incident to at least one edge of every color. For each color $a$, choose one edge $e_c$ of color $a$ such that $e_c$ is incident to at least one vertex in $S$. Let $E$ be the set of these chosen edges $e_c$. Then $|E| = n+k$ and $E$ contains exactly one edge of each color. Now, let $H$ be the subgraph with $V(H) = \bigcup_{(uv) \in E}\{u,v\}$ and $E(H) = E$, and let $S = \{v_1,v_2,\cdots,v_p\}$, where $p = |S|$. Partition $V(H) \setminus S$ into $X_1,\cdots,X_p$ such that $X_i \subseteq N_H(v_i)$ for all $1 \le i \le p$. Now, contract each $H_i = X_i \cup \{v_i\}$ to a single vertex (by contracting each edge of $H_i$ iteratively), and let the resulting graph be $H'$. We have that $|V(H')| = |S| \le \frac{n\log{k}}{140\sqrt{k}}$ and $|E(H')| = |S|+k$. Note that a rainbow cycle $C$ in $H'$ corresponds to a rainbow cycle in $G$ with length at most $3|C|$, by replacing each contracted vertex by at most a two-edge path. We may assume that $H'$ is simple, since otherwise we obtain a rainbow cycle of length at most $6$ in $G$. Then applying Corollary $\ref{bollobas_cor}$ to $H'$ gives a rainbow cycle in $G$ of length at most:
\begin{align*}
    \frac{14\left(\frac{n\log{k}}{140\sqrt{k}}+k\right)\log{k}}{k} = \frac{n(\log k)^2}{10k^{3/2}}+14\log k
\end{align*}
as desired. This proves Theorem \ref{res_one}.\end{proof}

We immediately obtain the following interesting corollary:

\begin{corollary}\label{cor_one}
Let $k > 1$ be an integer, and let $G$ be a simple graph on $n$ vertices. Suppose that we have a coloring of the edges of $G$ with $n+k$ color classes of size at least $ck$, where $c=10^9$, and suppose also that $\frac{140k^{3/2}}{\log{k}} \le n$. Then $G$ has rainbow girth at most $\frac{n(\log k)^2}{5k^{3/2}}$.
\end{corollary}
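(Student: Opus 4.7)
The plan is to derive Corollary \ref{cor_one} directly from Theorem \ref{res_one}, which already supplies the two-alternative bound on the rainbow girth under exactly the color-class hypotheses in the corollary. The only work is to show that, under the additional lower bound $n \ge 140 k^{3/2}/\log k$, both alternatives are dominated by the target quantity $\frac{n(\log k)^2}{5 k^{3/2}}$.

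First I would invoke Theorem \ref{res_one} to conclude that either (a) $G$ has rainbow girth at most $6$, or (b) $G$ has rainbow girth at most $\frac{n(\log k)^2}{10 k^{3/2}} + 14 \log k$. I would then handle the two cases in turn. For case (b), the key inequality is
\[
14 \log k \;\le\; \frac{n(\log k)^2}{10 k^{3/2}},
\]
which is equivalent to $n \ge 140 k^{3/2}/\log k$; this is exactly the hypothesis of the corollary, so adding it to the $\frac{n(\log k)^2}{10 k^{3/2}}$ term yields the claimed bound $\frac{n(\log k)^2}{5 k^{3/2}}$.

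For case (a), I need to verify $6 \le \frac{n(\log k)^2}{5 k^{3/2}}$, i.e. $n \ge 30 k^{3/2}/(\log k)^2$. Using the hypothesis $n \ge 140 k^{3/2}/\log k$, it suffices to check that $140/\log k \ge 30/(\log k)^2$, which reduces to $\log k \ge 3/14$; this holds for every integer $k \ge 2$ because $\log 2 = 1$. So case (a) is also absorbed into the bound.

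I do not foresee any obstacle: the corollary is essentially a clean-up of Theorem \ref{res_one} in the regime where the $14 \log k$ additive error becomes negligible, and all the estimates above are one-line verifications. The only point to be careful about is to remember that we are using base-$2$ logarithms (as noted before Theorem \ref{bollobas_thm}), so that $\log k \ge 1$ for $k \ge 2$, which is what makes case (a) trivial.
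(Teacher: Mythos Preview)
Your proposal is correct and follows essentially the same approach as the paper: both invoke Theorem \ref{res_one} and then verify that the hypothesis $n \ge 140 k^{3/2}/\log k$ absorbs the additive $14\log k$ term and also dominates the ``girth at most $6$'' alternative. The only cosmetic difference is that the paper checks $\frac{n(\log k)^2}{5k^{3/2}} \ge 7$ (hence may assume rainbow girth at least $7$) whereas you check the needed inequality $\ge 6$ directly; both are immediate from the hypothesis.
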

\begin{proof}
The condition on the size of $n$ is equivalent to:
$$14\log{k} \le \frac{n(\log{k})^2}{10k^{3/2}}.$$
We have that $G$ has rainbow girth at least $7$ since $\frac{n(\log k)^2}{5k^{3/2}} \ge 7$ is implied by the condition. Then Corollary \ref{cor_one} gives that $G$ has rainbow girth at most:
\begin{align*}
    \frac{n(\log k)^2}{10k^{3/2}}+14\log k \le \frac{n(\log k)^2}{5k^{3/2}}
\end{align*}
as desired. This proves Corollary \ref{cor_one}.\end{proof}

If we would like rainbow girth to be at most roughly $n/k^{3/2}$ (as promised by Corollary \ref{cor_one}), it is necessary that $k^{3/2} < n$, since a simple graph cannot have rainbow girth less than three. Corollary \ref{cor_one} can be interpreted as saying that, for the region where it makes sense (where $k^{3/2} < n$, roughly), when we relax the number of colors slightly from $n$ to $n+k$, we obtain a much shorter rainbow cycle of length at most approximately $n/k^{3/2}$, in comparison to the tight bound of $n/k$ for the case of $n$ colors.

Next, we present a result of a similar flavor for the case where $k$ is large relative to $n$.
\begin{theorem}\label{res_two}
Let $k > 1$ be an integer, and let $G$ be a simple graph on $n$ vertices. Suppose we have a coloring of the edges of $G$ with $n+k$ color classes of size at least $ck$, where $c=10^9$, and also that $140k^{10/9} \ge n$. Then $G$ has rainbow girth at most $6$.
\end{theorem}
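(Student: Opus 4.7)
The plan is to extend the hub-and-contract strategy from the proof of Theorem~\ref{res_one} so that the contracted multigraph is forced to contain parallel edges, which will lift to a rainbow cycle of length at most $6$ in $G$.

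First, I would try to build a hub $S \subseteq V(G)$ of size at most $\sqrt{2k}$ that meets every color class, using the same iterative random-sampling scheme as in the proof of Theorem~\ref{res_one}. Each color class has at least $ck$ edges, hence at least $\sqrt{2ck}$ incident vertices, so a uniformly random vertex hits any fixed color with probability at least $\sqrt{2ck}/n$. Plugging in the hypothesis $n \le 140k^{10/9}$, this probability is of order $k^{-11/18}$, and the aim is to drive the expected number of uncovered color classes below $1$ within a budget of $O(\sqrt{2k})$ sampled vertices by squeezing the constant $c = 10^9$ tightly in the Chernoff/union-bound estimates.

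Second, once such an $S$ is in hand, I would repeat the contraction step of Theorem~\ref{res_one}: pick one representative edge per color incident to $S$, partition $V(H) \setminus S$ into clusters $X_i \subseteq N_H(v_i)$, and contract each $H_i = X_i \cup \{v_i\}$ to a single vertex, obtaining a multigraph $H'$ on $|S|$ vertices with $|S|+k$ edges.

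Third, the choice $|S| \le \sqrt{2k}$ yields $\binom{|S|}{2} \le k < |S|+k = |E(H')|$, so by pigeonhole $H'$ contains two parallel edges. As in Theorem~\ref{res_one}, these two parallel edges correspond to two internally disjoint paths of length at most $3$ in $G$ sharing the same pair of endpoints in $S$, and their union is a cycle of length at most $6$ in $G$. It is rainbow because the two underlying representative edges come from distinct color classes, while the intermediate contracted edges, being themselves distinct representatives of other colors, introduce no repetition.

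The main obstacle will be the first step. The direct construction from Theorem~\ref{res_one} gives only $|S| = O(n \log k / \sqrt{k})$, which under the hypothesis is $O(k^{11/18} \log k)$, exceeding the target $\sqrt{2k}$ by a factor on the order of $k^{1/9} \log k$. Closing this gap will require sharpening the probabilistic estimates and exploiting the very large constant $c = 10^9$ for slack; the exponent $10/9$ in the hypothesis appears to be precisely the threshold at which this balancing just barely succeeds, and I expect the careful calibration of these Chernoff bounds to be the delicate technical heart of the proof.
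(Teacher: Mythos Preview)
Your plan has a genuine gap at step one, and you have in fact already spotted it: the random-sampling construction from Theorem~\ref{res_one} cannot be pushed down to a hub of size $\sqrt{2k}$. Each color class is supported on at least $\sqrt{2ck}$ vertices, so a uniformly random vertex hits a fixed color with probability at most $\sqrt{2ck}/n$, and any sample of size $|S|$ leaves in expectation at least $(n+k)\exp\bigl(-\sqrt{2ck}\,|S|/n\bigr)$ colors uncovered. Even to leave only $k/2$ colors uncovered you need $|S| \ge \frac{n}{\sqrt{2ck}}\ln\!\frac{2(n+k)}{k}$. Under the hypothesis $n = \Theta(k^{10/9})$ this is $\Theta\bigl(k^{11/18}\log k / \sqrt{c}\bigr)$, which exceeds $\sqrt{2k}$ by a factor of order $k^{1/9}\log k$. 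This factor is unbounded in $k$, so no choice of the constant $c$, however large, can close it; the exponent $10/9$ is \emph{not} the threshold at which the balancing succeeds, it is well past the regime where random sampling can possibly give a hub small enough to force parallel edges after contraction.

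The paper abandons random sampling entirely and instead builds the hub structurally. It takes a maximal family of \emph{colorful stars}, meaning stars with at least $ck^2/(4n)$ edges in which no single color occurs more than $c^{2/3}k^{2/3}$ times, and lets $S$ be the set of their centers. An averaging argument shows that at most $k/2$ colors are missed by this family. The crucial new idea is the bound on $|S|$: if two colorful stars shared more than $c^{2/3}k^{2/3}$ leaves, a short case analysis on $4$-cycles through the two centers would either produce a rainbow $4$-cycle or force one of the stars to be monochromatic on the shared leaves, contradicting ``colorful.'' Hence the stars are almost disjoint, and a counting argument using $n \le 140k^{10/9}$ yields $|S| < n^{1/5}/12$. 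This is small enough that the contracted multigraph has more edges than $|S|^2$, forcing parallel edges and hence a rainbow cycle of length at most $6$ exactly as in your step three. The rainbow-$4$-cycle disjointness argument is the key ingredient your outline is missing.
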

\begin{proof}
We may assume that $n \ge ck$ since otherwise we have at least $nck > n^2$ edges which is a contradiction since $G$ is simple. Let a \emph{colorful star} be a subgraph $H$ of $G$ such that $H$ is a star with at least $\frac{ck^2}{4n}$ edges such that no color appears more than $c^{2/3} k^{2/3}$ times in $E(H)$. Let a \emph{collection of colorful stars} be a set $C =\{H_1,H_2,\cdots,H_m\}$ of colorful stars such that every color appears in at most one of the $E(H_i)$. For a collection $C$ of colorful stars, for $1 \le i \le m$, let $v_i$ be the center of the star $H_i$, and let $V(C) = \{v_1,\cdots,v_p\}$ and $E(C) = \cup_{i=1}^p E(H_i)$ be the set of all star centers and the set of all edges, respectively.

Now, let $C$ be a collection of colorful stars in $G$, chosen to be maximal with respect to the number of stars. We first prove the following claim, which says that the number of colors appearing in $E(C)$ is large.

\begin{claim}\label{claim1}
At most $k/2$ colors do not appear in $E(C)$.
\end{claim}
\begin{proof}
Suppose not. Let $S$ be the set of colors which do not appear in any of the $E(v_i)$. Note that $|S| > k/2$. For each color $s \in S$, for $v \in V(G)$ let $d_s(v)$ be the number of edges incident to $v$ of color $s$, and set $d'_s(v) = d_s(v)$ if $d_s(v) \le c^{2/3}k^{2/3}$, and otherwise set $d'_s(v) = 0$. Now, let $H$ be the set of vertices $v \in V(G)$ with $d_s(v) > c^{2/3} k^{2/3}$. Note that $|H| < 2ck/(c^{2/3}k^{2/3}) = 2c^{1/3}k^{1/3}$. Then the number of edges of color $s$ with both ends in $H$ is at most $4c^{2/3}k^{2/3}$, so it follows that:
$$\sum_{v \in V(G)} d'_s(v) \ge ck - 4c^{2/3} k^{2/3} \ge \frac{ck}{2}$$
since the last inequality is equivalent to $k \ge 8^3 / c$ which is true for $k > 1$ since $c = 10^9$. Then, on average, a vertex $v$ has:
\begin{align*}
    \sum\limits_{s \in S} d_s'(v) > \frac{ck^2}{4n}.
\end{align*}
Now, let $v$ be a vertex for which $\sum\limits_{s \in S} d_s'(v) > \frac{ck^2}{4n}$, and construct a colorful star with center $v$ and $d_s'(v)$ edges of color s incident with $v$ for all $s \in S$. Then we can add $v$ to $C$ and obtain a larger collection of colorful stars, which contradicts the maximality of $C$. It follows that there are at most $k/2$ colors which do not appear in $E(C)$, as desired. This proves Claim \ref{claim1}.\end{proof}

We now prove a second claim, which says the number of colorful stars in $C$ is small.

\begin{claim}\label{claim2}
$|C| < \frac{n^{1/5}}{12}$.
\end{claim}
\begin{proof}
Suppose not; then $|C| \ge \frac{n^{1/5}}{12}$. It suffices to show a contradiction for the case where $t = |C| = \lceil\frac{n^{1/5}}{12}\rceil$, so that $\frac{n^{1/5}}{12} \le t < \frac{n^{1/5}}{12} + 1 \le \frac{n^{1/5}}{6}$ since $n \ge ck \ge 10^9k \ge 12^5$. Now, for a colorful star $H_i$ with center $v_i$, let $M(H_i) = V(H_i) \setminus \{v_i\}$. We claim that for any two colorful stars $H_i,H_j \in C$ with centers $v_i$ and $v_j$, if $H_{ij} = M(H_i) \cap M(H_j)$, then either $v_1$ has all its edges in $H_1$ to $H_{ij}$ in the same color class, or $v_2$ has all its edges in $H_2$ to $H_{ij}$ in the same color class. Suppose not. Then without loss of generality there are two edges $e_1=(v_i,w_1)$ and $e_2=(v_i,w_2)$ for $w_1,w_2 \in H_{ij}$ such that $e_1$ and $e_2$ have colors $a_1$ and $a_2$ with $a_1 \ne a_2$. Let $a_3$ be the color of $(v_j,w_1)$. Then clearly $a_3$ is also the color of $(v_j, w_2)$, since otherwise we obtain a rainbow cycle of length $4$. Now, consider an arbitrary edge $(v_j,w_3)$ to a vertex $w_3 \in H_{ij}$ with $w_3 \notin \{w_1,w_2\}$. We claim that $(v_j,w_3)$ must have color $a_3$. Indeed, if $(v_i,w_3)$ does not have color $a_1$ then the $4$-cycle $(v_i,w_1,v_j,w_3)$ implies that $(v_j,w_3)$ has color $a_3$, and if $(v_i,w_3)$ does not have color $a_2$ then the $4$-cycle $(v_i,w_2,v_j,w_3)$ implies that $(v_j,w_3)$ has color $a_3$. Since $(v_i,w_3)$ cannot have both color $a_1$ and color $a_2$ it follows that $(v_j,w_3)$ has color $a_3$ for all $w_3 \in H_{ij}$, as desired.

This implies that for all $v_i,v_j \in V(C)$ we have $|M(v_i) \cap M(v_j)|\le c^{2/3}k^{2/3}$. Then it follows that every colorful star $H_i$ has at least:
$$\frac{ck^2}{4n} - t c^{2/3} k^{2/3}$$
vertices in $M(H_i)$ which are not in $\cup_{j\ne i} M(H_j)$. The condition $140k^{10/9} \ge n$ implies $k \ge \frac{n^{9/10}}{140^{9/10}}$, and we have that:
\begin{align*}
    \frac{tck^2}{8n} \ge tc \frac{n^{9/5}}{8n140^{9/5}} \ge \frac{10^9 n}{96 \cdot 140^{9/5}} > n.
\end{align*}
We also have that:
$$\frac{tck^2}{8n} > t^2 c^{2/3} k^{2/3}$$
since the inequality is equivalent to $c^{1/3} k^{4/3} > 8nt$, which is true since (using $k^{10/9} \ge n/140$ and $t \le n^{1/5}/6$ from above):
\begin{align*}
    c^{1/3}k^{4/3} > \frac{8 \cdot 140^{6/5} k^{4/3}}{6} \ge \frac{8n^{6/5}}{6} \ge 8nt.
\end{align*}
Then we have that:
\begin{align*}
    \left|\bigcup_{H_i \in C} V(H_i) \right| &\ge t\left(\frac{ck^2}{4n} - t c^{2/3} k^{2/3}\right)\\ &= \frac{tck^2}{8n} + \frac{tck^2}{8n} - t^2 c^{2/3} k^{2/3} \\ 
    &> n + t^2 c^{2/3}k^{2/3}-t^2 c^{2/3}k^{2/3} \\ &> n
\end{align*}

which gives a contradiction. This proves Claim \ref{claim2}.\end{proof}

Now, for each color class with at least one edge in $E(C)$, we choose exactly one such edge. Let the resulting set of edges be $F$; from Claim $\ref{claim1}$, we know that $|F| \ge n + \frac{k}{2}$. Now, let $H$ be the subgraph with $V(H) = \bigcup_{(uv) \in F}\{u,v\}$ and $E(H) = F$, and let $S = \{v_1,v_2,\cdots,v_p\}$, where $p = |S|$. Partition $V(H) \setminus S$ into $X_1,\cdots,X_p$ such that $X_i \subseteq N_H(v_i)$ for all $1 \le i \le p$. Now, contract each $H_i = X_i \cup \{v_i\}$ to a single vertex, and let the resulting graph be $H'$. By Claim \ref{claim2}, we have that $|V(H')| < \frac{n^{1/5}}{12}$, and, since $k^{10/9} \ge n/140$ and $n \ge c=10^9$, we obtain: $$|E(H')| = |V(H')| + \frac{k}{2} \ge \frac{k}{2} \ge \frac{n^{9/10}}{2 \cdot 140^{9/10}} > \frac{n^{2/5}}{144} > |V(H')|^2.$$
Thus we obtain a rainbow cycle of length at most $2$ in $H'$, which gives a rainbow cycle of length at most $6$ in $H$, as desired. This proves Theorem \ref{res_two}.\end{proof}

We conclude this section with an immediate corollary of the above results which will be used in the proof of the next section:
\begin{corollary}\label{cor_to_use}
Let $k > 1$ be an integer, and let $G$ be a simple graph on $n$ vertices. Suppose we have a coloring of the edges of $G$ with $n+k$ color classes of size at least $ck$, where $c=10^9$. Then $G$ has rainbow girth at most $n/k$.
\end{corollary}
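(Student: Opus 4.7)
The plan is to combine Theorems \ref{res_one} and \ref{res_two} through a case split based on the size of $n$ relative to $k^{10/9}$. A preliminary observation I will use throughout is that, since $G$ is simple, $\binom{n}{2} \ge |E(G)| \ge (n+k) \cdot ck$, which forces $n \ge ck$, and therefore $n/k \ge c = 10^9$. In particular, any rainbow cycle of length at most $6$ is automatically of length at most $n/k$, so whenever the applied result delivers rainbow girth at most $6$, we are done.

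In the first case, where $n \le 140 k^{10/9}$, the hypothesis of Theorem \ref{res_two} is satisfied directly, so $G$ has rainbow girth at most $6 \le n/k$. In the second case, where $n > 140 k^{10/9}$, I instead invoke Theorem \ref{res_one}. It either produces a rainbow cycle of length at most $6$, handled as above, or a rainbow cycle of length at most $\frac{n(\log k)^2}{10 k^{3/2}} + 14 \log k$, and I must verify this quantity is at most $n/k$. I bound the two summands separately by $\frac{n}{2k}$: the inequality $\frac{n(\log k)^2}{10 k^{3/2}} \le \frac{n}{2k}$ reduces to $(\log k)^2 \le 5\sqrt{k}$, which is a routine elementary check valid for all $k \ge 2$; and the inequality $14 \log k \le \frac{n}{2k}$ follows from the Case~2 assumption $n/k > 140 k^{1/9}$, which reduces it to $\log k \le 5 k^{1/9}$, again a routine check for all $k \ge 2$.

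The main point requiring attention is not any individual step but the matching of the case boundary: the exponent $10/9$ in Theorem \ref{res_two} and the exponent $3/2$ in Theorem \ref{res_one} must interlock so that the cutoff $n = 140 k^{10/9}$ is simultaneously small enough for Theorem \ref{res_two} to apply on one side, and large enough on the other side for $14 \log k$ to be absorbed into $n/(2k)$ via the bound $n/k > 140 k^{1/9}$. Once these are lined up, the two elementary inequalities above close the argument with substantial slack.
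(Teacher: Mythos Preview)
Your proposal is correct and takes essentially the same approach as the paper: a case split that applies Theorem~\ref{res_two} when $n$ is small and Theorem~\ref{res_one} when $n$ is large, closing with the two elementary inequalities $(\log k)^2 \le 5\sqrt{k}$ and $\log k \le 5k^{1/9}$. The only cosmetic difference is where the split is placed: the paper splits at $n = 28k\log k$ and then argues that $28k\log k > n$ forces $140k^{10/9} \ge n$ (via $5k^{1/9} > \log k$), whereas you split directly at $n = 140k^{10/9}$ and use $5k^{1/9} \ge \log k$ on the other side to absorb the $14\log k$ term---the same inequality doing the same work in a slightly different spot.
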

\begin{proof}
If $28k\log{k} \le n$, then by Theorem \ref{res_one} we have rainbow girth at most:
$$\frac{n(\log k)^2}{10k^{3/2}}+14\log k \le \frac{n}{2k} + \frac{n}{2k} = \frac{n}{k}$$
since $(\log{k})^2 \le 5\sqrt{k}$ holds for $k \ge 2$. To see this, note that it is equivalent to $\log{k} \le \sqrt{5}k^{1/4}$. Let $f(k) = \sqrt{5}k^{1/4}-\log{k}$. We compute:
\begin{align*}
    f'(k) = \frac{\sqrt{5}}{4k^{3/4}} - \frac{1}{k \ln{2}}
\end{align*}
and it follows that $f(k)$ achieves its minimum for $k_0 \ge 2$, $k \in \mathbb{R}$ at the point $k_0 = \left(\frac{4}{\sqrt{5}\ln{2}}\right)^4$. We verify that $f(k_0) \ge 0$, so it follows that $f(k) \ge 0$ for all $k \ge 2$, as desired.

If $28k\log{k} > n$, we claim that $140k^{10/9} \ge n$. Indeed, $140k^{10/9} > 28k\log{k}$ is equivalent to $5k^{1/9}>\log{k}$ which is true for $k \ge 2$. To see this, by taking derivatives as before it suffices to verify that the inequality is true for $k_0$ such that $k_0^{1/9} = 9/(5\ln{2})$, which is true. Then, Theorem $\ref{res_two}$ gives that $G$ has rainbow girth at most $6$. Since $n^2 \ge |E(G)| \ge cnk$, we have that $n/k \ge c \ge 6$, so it follows that $G$ has rainbow girth at most $n/k$, as desired. This proves Corollary \ref{cor_to_use}.\end{proof}

\section{$n$ colors}
Now we are ready to prove Theorem \ref{main_result}, which we restate:
\begin{theorem}\label{main}
Let $k \ge 1$ be an integer, and let $G$ be a simple graph on $n$ vertices. Suppose we have a coloring of the edges of $G$ with $n$ color classes of size at least $ck$, where $c = 10^{11}$. Then $G$ has rainbow girth at most $n/k$.
\end{theorem}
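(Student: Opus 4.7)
The strategy is the one outlined in the introduction: reduce the setting of Theorem \ref{main} ($n$ color classes of size $\ge ck$, $c=10^{11}$) to the setting of Corollary \ref{cor_to_use} ($n'+k$ color classes of size $\ge c'k$, $c'=10^9$, on $n'$ vertices), so that Corollary \ref{cor_to_use} produces a rainbow cycle of length $\le n'/k \le n/k$. The reduction uses Aharoni's auxiliary-digraph technique together with Shen's approximate Caccetta--H\"{a}ggkvist bound (Theorem \ref{shen_thm}).

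First, I would construct an auxiliary digraph $D$ on $V(G)$ as follows. For each color class $a$, choose a representative vertex $v_a$ and designate some edges of color $a$ incident to $v_a$ as out-arcs of $D$ of color $a$. A directed cycle in $D$ then corresponds to a rainbow cycle in $G$ of equal length. Using that each color class has at least $ck=10^{11}k$ edges, a greedy or probabilistic assignment of colors to representatives—together with the fact that a color class of size $\ge ck$ must have either many high-degree vertices or many distinct incident vertices—lets me ensure that every vertex of $D$ has out-degree at least $Mk$, for some large constant $M$ proportional to $c$.

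Next, applying Theorem \ref{shen_thm} to $D$ yields a directed cycle (hence a rainbow cycle in $G$) of length at most $\lceil n/(Mk) \rceil + 73$. When $n$ is sufficiently large relative to $k$, this quantity is already at most $n/k$ and we are done. When $n$ is small, we exploit the fact that only a small fraction of the color classes are needed to give $D$ its minimum out-degree of $Mk$; the remaining ``surplus'' classes can be used to identify a subgraph $G[V']$ on $n'\le n$ vertices for which at least $n'+k$ color classes still have $\ge c'k$ edges inside $V'$. At that point Corollary \ref{cor_to_use} applied to $G[V']$ completes the proof.

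The main obstacle is the construction of the auxiliary digraph in the first step: arranging for every vertex simultaneously to have out-degree $\ge Mk$, even though color classes may be structured very differently—ranging from near-matchings, where no single vertex has high color-$a$ degree, to near-stars, where a single vertex dominates. The large constant $c=10^{11}$ provides enough slack in a probabilistic or averaging argument (Chernoff/Chebyshev type, cf.\ Theorem \ref{chernoff_thm} and Theorem \ref{chebyshev}) to absorb the resulting concentration losses and, simultaneously, to guarantee the surplus of color classes needed in the small-$n$ fallback through Corollary \ref{cor_to_use}.
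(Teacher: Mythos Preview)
Your outline has the right ingredients (Shen's theorem and Corollary \ref{cor_to_use}) but the first step, as stated, cannot be carried out. You claim that a greedy or probabilistic assignment of colors to representative vertices yields a digraph $D$ in which \emph{every} vertex has out-degree at least $Mk$. This is false in general: if every color class is a matching of size $ck$, then for any choice of representative $v_a$ the color $a$ contributes at most one out-arc at $v_a$. Since there are exactly $n$ colors and $n$ vertices, no assignment can give more than constant average out-degree, let alone $Mk$ at every vertex. The observation that a large color class has ``many high-degree vertices or many distinct incident vertices'' does not rescue the argument; the latter alternative is precisely the matching case, and it provides no single vertex with large out-degree.

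The paper's proof therefore does not split on the size of $n$ but on the structure of the color classes. A color $a$ is called \emph{vertex-dominating} if some vertex is incident to at least $t/100+8k$ edges of color $a$ (with $t=ck$). If almost every color is vertex-dominating, one assigns each such color to a dominated vertex and obtains a digraph on these centers with minimum out-degree $\ge 8k$, to which Theorem \ref{shen_thm} applies. Otherwise there are many colors that are genuinely spread out; for these one cannot build the digraph, but one can delete a small \emph{random} set $T$ of vertices (size between $2k$ and $8k$) and use Chebyshev (Claim \ref{var}) to show that, with positive probability, all but fewer than $k$ color classes retain at least $t/100=10^9k$ edges in $G\setminus T$. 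This produces $|V(G\setminus T)|+k$ usable color classes on $|V(G\setminus T)|$ vertices, exactly the hypothesis of Corollary \ref{cor_to_use}. Your ``surplus color classes'' idea does not match this: the surplus comes from removing vertices, not from leaving colors unused in the digraph construction.
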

\begin{proof}
If $k=1$, then taking one edge of each color gives a rainbow cycle of length at most $n$. So we may assume $k > 1$. Also, since $G$ is simple, we have that the number of edges $|E(G)|$ satisfies $n^2 \ge |E(G)| \ge nck$, and thus we may assume that $n \ge ck$. Now, let $t=ck$. By removing edges if necessary, we may assume that every color class has exactly $t$ edges. Now, we say that a color $a$ \emph{dominates} a vertex $v \in V(G)$ if there are at least $\frac{t}{100}+8k$ edges incident to $v$ with color $a$. Call a vertex $v$ \emph{color-dominated} if there exists a color $a$ which dominates $v$, and call a color $a$ \emph{vertex-dominating} if there exists a vertex $v$ which is dominated by $a$. The definition is motivated by a desire to reduce to the case of the Caccetta-H\"aggkvist conjecture as in \cite{aharoni}, where each color class is a star centered at a different vertex. A color being vertex-dominating means that its edges form a large star, which will be useful in applying existing approximate results for the Caccetta-H\"aggkvist conjecture. Now, for each vertex-dominating color $a$, pick one vertex $v_a$ dominated by $a$ (not necessarily unique), and let the resulting set of vertices be $S$. Let $H = V(G) \setminus S$.

Suppose first that $|H| \le \frac{t}{100}$. Let $b$ be the coloring of the edges. We construct a digraph $G'$ with $V(G') = S$, and for all $i, j$ with $v_i, v_j \in S$, there is an arc $v_i \rightarrow v_j$ if $v_iv_j \in E(G)$ and $b(v_iv_j) = i$. Every vertex $v_i$ is incident with at least $\frac{t}{100}+8k$ edges $e$ with $b(e) = i$, and since $|H| \le \frac{t}{100}$, there are at least $8k$ edges $e = v_iu$ with $b(e) = i$ and $u \in S$. Therefore, $\delta^+(G') \geq 8k$. 

Now, we claim $n/(8k) + 74 \le n/k$, which is equivalent to $n \ge \frac{592k}{7}$ which is true since $n \ge ck = 10^{11}k$.

Then, by applying Theorem \ref{shen_thm} to $G'$ we obtain a directed cycle $K$ of length at most $\lceil n/(8k) \rceil + 73 \le n/k$ in $G'$. The edges of $G$ that correspond to arcs of $K$ form a rainbow cycle of length at most $n/k$ in $G$.

So we may assume that $|H| > \frac{t}{100}$. Let $r = |H|$, so we have $\frac{t}{100} < r \le n$. Let $T \subseteq H$ be a random set of vertices in $H$ where each vertex in $H$ is included in $T$ independently with probability $\frac{4k}{r}$.

Now consider a color $a$ which does not dominate a vertex in $S$ (and thus does not dominate any vertex). We will show that the probability that $a$ has at least $t/100$ edges with both ends in $G \setminus T$ is at least $1-\frac{k}{2r}$. We claim that we may assume all of the edges of $a$ have both ends in $H$. Indeed, if this is not the case, perform the following iterative process while there is still an edge $e$ of color $a$ not contained in $H$. 

If $e$ has both ends in $G \setminus H$, then remove $e$. Now, note that at most $200$ vertices are incident to at least $t/100$ edges of color $a$. Since $|H| > \frac{t}{100} = 2^9k$, there exists a pair of vertices $v_1,v_2 \in H$ such that there is no edge of color $a$ between $v_1$ and $v_2$ and $v_1,v_2$ are both incident to less than $t/100$ edges of color $a$. Then add an edge of color $a$ between $v_1$ and $v_2$. If instead $e$ has one end in $H$, say the vertex $w$, then remove $e$ and add an edge of color $a$ from $w$ to any vertex $v \in H$ such that there is not already an edge between $v$ and $w$ of color $a$ and both $v$ and $w$ are incident to less than $t/100$ edges of color $a$. Repeat this process until we obtain a graph $G'$ where all the edges of $a$ have both ends in $H$. If we can show that for $G'$ the probability that $a$ has at least $t/100$ edges in $G' \setminus T$ is at least $1-\frac{k}{2r}$, then it clearly follows that the probability that $a$ has at least $t/100$ edges in $G \setminus T$ is also at least $1-\frac{k}{2r}$. Thus, we may assume without loss of generality that all of the edges of $a$ are contained in $H$, as claimed.

Now, let the edges of $a$ be $e_1,\cdots,e_t$. Let the random variable $E_i$ have value $1$ if $e_i \in G \setminus T$ and have value $0$ otherwise. Let $E = \sum_{i=1}^t E_i$, and for a random variable $R$ let $\text{Var}(R)$ denote the variance of $R$. Since $a$ is not vertex-dominating, we have that each edge $e_i$ shares an end with at most $\frac{t}{50}+16k$ edges of the same color. It follows that each $E_i$ is dependent on at most $\frac{t}{50}+16k$ of the variables $\{E_1,E_2,\cdots,E_t\}$. Let $x = \frac{r-4k}{r}$, and note that the probability that an edge $e_i$ is in $G \setminus T$ is simply $x^2$, so for all $1 \le i \le t$ we have that $E_i$ is a Bernoulli random variable with probability equal to $x^2$. Then it follows that $\text{Var}(E_i) = x^2(1-x^2)$ and furthermore, if $e_i$ and $e_j$ share an end, we obtain:
\begin{align*}
    \text{Cov}(E_i,E_j)&=\mathbb{E}(E_i E_j) - \mathbb{E}(E_i)\mathbb{E}(E_j) \\
    &= x^3 - x^4
\end{align*}
and thus we have:
\begin{align*}
    \text{Var}(E) &= \sum_{i=1}^t \text{Var}(E_i) + \sum_{1 \le i \ne j \le t} \text{Cov}(E_i,E_j) \\
    &\le f(x) := tx^2(1-x^2)+\left(\frac{t}{50}+16k\right)t(x^3-x^4).
\end{align*}

\begin{claim}\label{var}
Let $\alpha = 1-\frac{400}{c}$. For all $\alpha \le y < 1$ we have:
$$f(y) \le t^2 \left(y^2 - \frac{1}{100}\right)^2 \frac{k}{2r}.$$
\end{claim}

\begin{proof}
Since $t/100 < r$, we have that $\frac{100}{c} > \frac{k}{r} > 0$ and thus $\alpha = 1-\frac{400}{c} < y < 1$. Define $g(y)$ as follows:
\begin{align*}
    g(y) = t^2\left(y^2-\frac{1}{100}\right)^2 \frac{1-y}{8}.
\end{align*}
We claim that $f(y) \le g(y)$ for all $\alpha \le y < 1$. To see this, let $h(y) = g(y)-f(y)$. Then $h(y) \ge 0$ is equivalent to:
\begin{align*}
    h_1(y) = \frac{h(y)}{t(1-y)} =  \frac{t\left(y^2-\frac{1}{100}\right)^2}{8}-y^2-\left(\frac{t}{50}+16k+1\right)y^3 \ge 0.
\end{align*}
We claim that $h_1(\alpha) \ge 0$ and $h_1'(y) \ge 0$ for all $\alpha \le y < 1$. For the first claim, since $t = ck = 10^{11}k$ and $\alpha = 1-\frac{400}{c}$, we have that:
\begin{align*}
    \frac{t\left(\alpha^2-\frac{1}{100}\right)^2}{8} \ge \frac{t}{32} 
    \ge \frac{t}{50}+16k+2
    \ge \alpha^2 + \left(\frac{t}{50}+16k+1\right)\alpha^3.
\end{align*}
To show $h_1'(y) \ge 0$ for all $\alpha \le y < 1$, we compute:
\begin{align*}
    h_1'(y) = \frac{t}{2}\left(y^2-\frac{1}{100}\right)y-2y-3\left(\frac{t}{50}+16k+1\right)y^2.
\end{align*}
Since $y>0$, $h_1'(y) \ge 0$ is equivalent to $h_2(y) \ge 0$, where:
\begin{align*}
    h_2(y) = \frac{t}{2}\left(y^2-\frac{1}{100}\right)-2-3\left(\frac{t}{50}+16k+1\right)y.
\end{align*}
Now, we claim that $h_2(\alpha) \ge 0$ and $h_2'(y) \ge 0$ for $\alpha \le y < 1$. The first claim follows from the facts $t=ck=10^{11}k$ and $\alpha = 1-\frac{400}{c}$:
\begin{align*}
    \frac{t\left(\alpha^2-\frac{1}{100}\right)}{2} \ge \frac{t}{4} \ge 2+3\left(\frac{t}{50}+16k+1\right) \ge 2+3\left(\frac{t}{50}+16k+1\right)\alpha.
\end{align*}
To show $h_2'(y) \ge 0$ for all $\alpha \le y < 1$, we compute:
\begin{align*}
    h_2'(y) = ty-3\left(\frac{t}{50}+16k+1\right).
\end{align*}
Now, $t\alpha-3\left(\frac{t}{50}+16k+1\right) \ge 0$ for all $k \ge 1$, so it follows that $h_2'(y) \ge h_2'(\alpha) \ge 0$ for all $\alpha \le y < 1$. This implies that $h_1(y) \ge 0$ for all $\alpha \le y < 1$, which in turn gives $h(y) \ge 0$ for all $\alpha \le y < 1$. Thus $f(y) \le g(y)$ for all $\alpha \le y < 1$, and we obtain:
\begin{align*}
    f(y) \le g(y) \le t^2 \left(y^2 - \frac{1}{100}\right)^2 \frac{k}{2r}.
\end{align*}
as desired. This completes the proof of Claim \ref{var}.\end{proof}

Now, Claim \ref{var} gives:
\begin{align*}
    \text{Var}(E) \le f(x) \le  t^2 \left(x^2 - \frac{1}{100}\right)^2 \frac{k}{2r}.
\end{align*}
Let $\lambda = t\left(x^2 - \frac{1}{100}\right)$. Then we have shown that $\text{Var}(E) \le \lambda^2 \frac{k}{2r}$. Let $q$ be the probability that the color $a$ has at least $t/100$ of its edges in $G \setminus T$. Note that $\mathbb{E}(E) = tx^2$, so:
\begin{align*}
    1-q &= \mathbb{P}\left(E \le t/100 \right) = \mathbb{P}(\mathbb{E}(E)-E \ge \lambda).
\end{align*}
Then by Theorem \ref{chebyshev} (Chebyshev's Inequality), we have:
\begin{align*}
    1-q \le \mathbb{P}(|E-\mathbb{E}(E)| \ge \lambda) \le \frac{\text{Var}(E)}{\lambda^2} \le \frac{k}{2r}.
\end{align*}
We say that a color $a$ is \emph{bad} if $a$ is not vertex-dominating and $a$ has less than $t/100$ of its edges in $G \setminus T$. Let $B$ be the set of bad colors, and let $Y = |B|$. Since $1-q \le \frac{k}{2r}$, we have that $\mathbb{E}(Y) \le k/2$. It follows from Markov's Inequality that $\mathbb{P}(Y \ge k) \le 1/2$. Recall that $T$ was formed by choosing each vertex in $H$ independently with probability $4k/r$. Then $\mathbb{E}(|T|) = 4k$. Applying Theorem \ref{chernoff_thm} yields that for all $k \ge 2$:
$$\mathbb{P}(|T| \ge 8k) + \mathbb{P}(|T| \le 2k) \le \exp(-4k/3) + \exp(-k/2) < 1/2.$$
Since $k > 1$ is an integer, it follows that with positive probability we have both $2k < |T| < 8k$ and $Y < k$, so there exists a set $T \subset G \setminus S$ with $|T| \le 8k$ and such that $|T|-Y \ge k$. If $G' = G \setminus T$, then since $|T| \le 8k$ it follows that for every vertex-dominating color class at least $t/100$ of its edges are in $G'$. Then we have that at least $|V(G')|+k$ colors $a$ have at least $t/100$ edges in $G'$. Applying Corollary \ref{cor_to_use} to $G'$ gives that $G'$ has rainbow girth at most $|V(G')|/k$ and thus $G$ has rainbow girth at most $n/k$, as desired. This completes the proof.\end{proof}
\section{Further Work}
There are a number of directions further research in this area can go. Here we mention a few of our favorites. One direction is to prove tight results, such as proving that Conjecture \ref{aharoni_thm} is true for $k = 3$. Another direction is to try to improve the constant from our proof; we made little effort to optimize it. 

Another interesting question is whether there exist extremal examples for Conjecture \ref{aharoni_thm} which are not inherited from Conjecture \ref{ch_thm}, namely that do not have the property that for each vertex $v \in V(G)$ there exists a color $a$ whose color class is the edge set of a star centered at $v$. Finally, a related problem which we did not consider is a relaxation of Conjecture \ref{aharoni_thm}, which is the following:

\begin{conjecture}[\cite{devos}]\label{proper}
Let $n,k$ be positive integers, and let $G$ be a simple graph on $n$ vertices. Let $b$ be a coloring of the edges of $G$ with $n$ color classes of size at least $k$; then $G$ has a cycle $C$ of length at most $\lceil n/k \rceil$ such that no two incident edges of $C$ are the same color.
\end{conjecture}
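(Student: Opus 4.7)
Since a rainbow cycle in particular has no two adjacent edges of the same color, Conjecture \ref{proper} is a weakening of Conjecture \ref{aharoni_thm}, and Theorem \ref{main} already establishes Conjecture \ref{proper} under the stronger hypothesis that each color class has size at least $ck$ with $c = 10^{11}$. A dedicated proof of Conjecture \ref{proper} would therefore aim to drive this constant down to $1$, matching the statement, by exploiting the fact that proper coloring permits color repetitions on non-adjacent edges of the cycle.

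The plan is to revisit the proof of Theorem \ref{main} and relax the rainbow requirement at every step. First, carry over the partition of colors into vertex-dominating and the rest, with the threshold $\frac{t}{100}+8k$ replaced by a smaller quantity; the randomized vertex-deletion argument using Chebyshev's inequality (Theorem \ref{chebyshev}) should still succeed with a smaller threshold, since the weaker conclusion we seek tolerates more collisions among the preserved edges. Second, replace the final appeal to Corollary \ref{cor_to_use} (and hence to Theorem \ref{shen_thm}) with a proper-coloring analog of the Caccetta-H\"aggkvist conjecture: in a digraph whose out-arcs at each vertex are labeled by colors, find a short directed cycle whose consecutive arcs carry different labels. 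The reduction of \cite{aharoni} adapts naturally, since we can direct the edges of each color class away from a chosen representative even when color classes are more complex than stars.

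A complementary line of attack is to bypass rainbow cycles entirely by working with matchings. For each color class of size at least $k$, extract a large matching; the union of these matchings is a graph in which any cycle using edges from distinct matchings is automatically properly colored. One can then combine this with the techniques of Section~2, in particular Theorem \ref{res_one} and Theorem \ref{res_two}, to look for short cycles, with the advantage that consecutive edges of the target cycle need only come from different matchings, not different colors.

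The main obstacle is sharpness. The reductions via Theorem \ref{shen_thm} and Corollary \ref{bollobas_cor} are each approximate, losing at least a constant (or logarithmic) factor, and it is not clear how to absorb these losses using only the freedom granted by proper coloring. Proving Conjecture \ref{proper} with the tight constant will likely require a new combinatorial argument tailored to proper edge colorings, perhaps extending the $k=2$ method of \cite{devos}, rather than a direct adaptation of the rainbow-cycle machinery developed in this paper.
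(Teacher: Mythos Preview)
The paper does not prove this statement: Conjecture~\ref{proper} appears in the Further Work section as an open problem that the authors explicitly ``did not consider,'' and no proof is offered. There is therefore no proof in the paper against which to compare your proposal.

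Your text is likewise not a proof. You correctly observe that the conjecture is weaker than Conjecture~\ref{aharoni_thm} and that Theorem~\ref{main} settles it when the color classes have size $\Omega(k)$ rather than $k$; but the remainder is a programme of possible attacks (adapting the random-deletion argument, a proper-coloring variant of Caccetta--H\"aggkvist, matching extractions) together with an honest assessment that the approximate tools used here lose constants that you do not know how to recover. That is a reasonable research outline, but it does not constitute a proof of the stated conjecture, and you acknowledge as much in your final paragraph.

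If the intent was to supply a proof for a conjecture, none is expected here: the statement is presented in the paper as open.
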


This conjecture is interesting because it still implies Conjecture \ref{ch_thm}, but seems like it might be substantially easier than Conjecture \ref{aharoni_thm}, as it deals with a local condition rather than a global condition. However, we suspect it may require different methods than those used in this paper.

Finally, nowhere in this paper did we use induction, while a number of the results for Conjecture \ref{ch_thm} utilize induction. Is there a way to use inductive arguments in this context?


\pagebreak
\section{Corrigendum - added September 9, 2022}

An error in the paper was pointed out to the authors via private correspondence by Yuhui Cheng, for which we are very grateful. The error is in the part of the proof of Theorem \ref{main} which concerns the iterative process which allows us to assume without loss of generality that all the edges of $a$ are contained in $H$. In particular, the issue is in the line ``\emph{If instead e has one end in $H$, say the vertex w, then remove $e$ and add an edge of
color $a$ from $w$ to any vertex $v \in  H$ such that there is not already an edge between $v$ and
$w$ of color $a$ and both $v$ and $w$ are incident to less than $t/100$ edges of color $a$.}'' This is not always possible; in fact it is possible that there is an edge of color $a$ from $w$ to every other vertex in $H$ which is incident to less than $t/100$ edges of color $a$. There are at least $|V(H)| - 200 > t/100 - 200$ vertices in $H$ incident to fewer than $t/100$ edges of colour $a$; so when this process fails, $w$ is already adjacent with an edge of colour $a$ to at least $t/100 - 200$ vertices in $H$. 

We resolve this issue as follows. If we encounter this situation, we delete the rest of the edges of color $a$ incident with $w$ and a vertex in $G \setminus H$, and then continue the iterative process. We claim that this will delete at most $0.01t$ edges of color $a$. Indeed, note that for each vertex $w \in H$ that we delete such edges for, we delete at most $8k+200 \le 208k$ edges, and the number of such problematic vertices is at most:
\begin{align*}
    \frac{2t}{\frac{t}{100}-200} \le \frac{2 \cdot 10^{11}}{10^9-200}
\end{align*}
Then we have that the total number of edges of color $a$ which are deleted is at most:
\begin{align*}
    \frac{416 \cdot 10^{11}}{10^9-200} k < 0.01 \cdot 10^{11} k
\end{align*}
since $416 < 0.01(10^9 - 200)$. Therefore we preserve at least $0.99t$ edges from each color class. We finish by first noting that the arguments showing Corollary \ref{cor_to_use} go through with $c = 0.99 \cdot 10^9$, and also that the rest of the proof of Theorem \ref{main} goes through with $0.99 \cdot 10^{11} k$ edges instead of $ 10^{11} k$ edges. It follows that the result of Theorem \ref{main} still holds.

\end{document}